\newtheorem{lemma}{Lemma}[section]
\newtheorem{proposition}{Proposition}[section]
\newtheorem{theorem}{Theorem}[section]
\newtheorem{corollary}{Corollary}[theorem]
\theoremstyle{definition}
\newtheorem{definition}{Definition}[section]
\theoremstyle{remark}
\newtheorem{remark}{Remark}[section]
\newtheorem{example}{Example}[section]
\def\fbar{\overline{f}}
\begin{document}

\title{\(3\)-class field towers of exact length \(3\)}

\author{M. R. Bush}
\address{Mathematics Department\\Washington and Lee University\\Lexington\\Virginia 24450\\USA}
\email{bushm@wlu.edu}
\author{D. C. Mayer}
\address{Naglergasse 53\\8010 Graz\\Austria}
\email{algebraic.number.theory@algebra.at}
\urladdr{http://www.algebra.at}

\subjclass[2000]{Primary 11R37; Secondary   11R11, 11R29,  20D15, 20F14}
\keywords{\(p\)-class field towers, principalization of \(p\)-classes,
quadratic fields, transfer map, relation rank, inversion automorphism, Schur \(\sigma\)-groups, central series, derived series}

\date{December 1, 2013}


\begin{abstract}
The $p$-group generation algorithm is used to verify that the Hilbert \(3\)-class field tower has length $3$
for certain imaginary quadratic fields \(K\) with \(3\)-class group \(\mathrm{Cl}_3(K) \cong [3,3]\).
Our results provide the first examples of finite \(p\)-class towers of length $> 2$ for an odd prime \(p\).
\end{abstract}

\maketitle


\section{Introduction}
\label{s:Intro}

In 1925, Schreier and Furtw\"angler \cite[\S\ 15.1.1, p. 218]{FRL} asked whether the ascending tower
\(K\le\mathrm{F}^1(K)\le\mathrm{F}^2(K)\le\ldots\) of successive Hilbert class fields of an algebraic number field \(K\)
can be infinite \cite[\S\ 11.3, p. 46]{Ha1}. In their famous 1964 paper \cite{GoSh}, Golod and Shafarevich gave an affirmative answer. They did this by proving that the tower of Hilbert \(p\)-class fields \(K\le\mathrm{F}_p^1(K)\le\mathrm{F}_p^2(K)\le\ldots\)
(which sits inside the tower of Hilbert class fields) is infinite if the base field \(K\) has sufficiently large \(p\)-class rank \(d_p(\mathrm{Cl}(K))\) where $p$ is some prime. They combined this result with a theorem warranting large \(p\)-class rank \(d_p(\mathrm{Cl}(K))\) whenever
sufficiently many primes ramify completely in \(K\) with exponents divisible by \(p\), and thus showed that the \(2\)-tower of a quadratic field
\(K=\mathbb{Q}(\sqrt{D})\) with highly composite radicand \(D\) is infinite. For example, taking \(D=-2\cdot 3\cdot 5\cdot 7\cdot 11\cdot 13=-30\,030\)
or \(D=2\cdot 3\cdot 5\cdot 7\cdot 11\cdot 13\cdot 17\cdot 19=9\,699\,690\) one obtains a \(2\)-tower of length \(\ell_2(K)=\infty\).

A year earlier, Shafarevich \cite{Sh} had proved that if $p$ is an odd prime then $r = d$ where
\(d=\dim_{\mathbb{F}_p}(\mathrm{H}_1(G,\mathbb{F}_p))\)
and
\(r=\dim_{\mathbb{F}_p}(\mathrm{H}_2(G,\mathbb{F}_p))\)
are the generator and relation ranks of the \(p\)-tower group
\(G=\mathrm{G}_p^\infty(K)=\mathrm{Gal}(\mathrm{F}_p^\infty(K)\vert K)\) and \(K\) is an imaginary quadratic field.
Together with the general condition \(r>\frac{1}{4}d^2\) for a finite \(p\)-tower group, this established the bound \(d<4\) for an imaginary quadratic field with finite $p$-tower, which was improved to \(d<3\) by Koch and Venkov \cite{KoVe} in 1975.

Since the generator rank \(d\) of \(G\) coincides with the \(p\)-class rank \(d_p(\mathrm{Cl}(K))\) of \(K\),
the inequality \(d<3\)  implies that the only imaginary quadratic fields \(K\) for which
the length \(\ell_p(K)\ge 2\) of their \(p\)-tower is an open problem ($p$ an odd prime), are those with \(d_p(\mathrm{Cl}(K))=2\).
In the case $p= 3$, such fields were investigated by Scholz and Taussky~\cite{SoTa}.
They proved that \(\ell_3(K)=2\) if the second \(3\)-class group \(\mathrm{G}_3^2(K)=\mathrm{Gal}(\mathrm{F}_3^2(K)\vert K)\) of \(K\)
is a metabelian \(3\)-group in Hall's isoclinism family \(\Phi_6\), and their proof was confirmed with different techniques by Heider and Schmithals \cite{HeSm} in 1982 and by Brink and Gold \cite{BrGo} in 1987.

No cases of \(p\)-towers of finite length \(\ell_p(K)>2\) ($p$ an odd prime) were known up to now and it is the main purpose of the present article to provide the first examples with \(\ell_3(K) = 3\). We note that examples of $2$-towers of length $3$ have appeared previously in~\cite{Bu1}. The main approach is to formulate conditions (see Theorem~\ref{main}) that guarantee that the Galois group
\(\mathrm{G}_3^3(K)=\mathrm{Gal}(\mathrm{F}_3^3(K)\vert K)\) has derived length \(3\).

The layout of the paper is as follows. In Section~2 we recall certain properties  shared by the Galois groups \(G = \mathrm{G}_p^\infty(K)=\mathrm{Gal}(\mathrm{F}_p^\infty(K)\vert K)\) when $K$ is an imaginary quadratic field. We also introduce the notions of transfer target type and transfer kernel type for the group $G$ and explain how these can be computed arithmetically. In Section~3 we recall how the $p$-group generation algorithm can be used to enumerate finite $p$-groups of fixed generator rank $d$. We also explain how some of the arithmetic data introduced in Section~2 can be used to constrain this enumeration. Finally, in Section~4 we formulate conditions on $G$ which are sufficient to cause an enumeration based search to terminate allowing us to identify the group $G$ as just one of two possible groups both of derived length $3$. This then yields a criterion for an imaginary quadratic field $K$ to have \(\ell_3(K) = 3\).


\section{Arithmetic restrictions on Galois groups}
As discussed in the introduction, the groups of interest in this paper are the pro-$p$ Galois groups
$G=\mathrm{G}_p^\infty(K)=\mathrm{Gal}(\mathrm{F}_p^\infty(K) / K)$ where $K$ is an imaginary quadratic field. Recall the following definition and notation.
\begin{definition}
The {\em derived series of a pro-$p$ group $G$} is defined recursively by $G^{(0)} = G$ and $G^{(n)} = [G^{(n-1)}, G^{(n-1)}]$ for $n \geq 1$ where $[H,K]$ denotes the closed subgroup generated by all commutators $[h,k] = h^{-1} k^{-1} h k$ with $h \in H$, $k \in K$. If $G^{(n-1)} \neq 1$ and $G^{(n)} = 1$ then we say that $G$ has {\em derived length $n$.}
\end{definition}
By class field theory, the abelianization $G^{ab} = G/ G^{(1)}$ is isomorphic to the $p$-class group $\mathrm{Cl}_p(K)$. In particular, it is a finite abelian $p$-group. This assertion also extends to each finite index subgroup $H$ of $G$ whose abelianization is isomorphic to the $p$-class group of the associated extension of $K$. Focusing on the subgroups in the derived series, one observes that these subgroups correspond to the fields in the $p$-class tower of $K$ and that this tower is finite exactly when $G$ is a finite $p$-group. In particular, the length of a finite tower corresponds to the derived length of $G$.

In~\cite{KoVe}  the notion of a Schur $\sigma$-group was introduced. We recall the definition below.
\begin{definition} A pro-$p$ group $G$ is called a {\em Schur $\sigma$-group} if the following conditions hold:
\begin{itemize}
\item[1.] The generator rank $d$ and relation rank $r$ of $G$ are equal.
\item[2.] $G^{ab}$ is finite.
\item[3.] There exists an automorphism $\sigma$ of order $2$ on $G$
which induces the inverse automorphism $x \mapsto x^{-1}$ on  $G^{ab}$. 
\end{itemize}
\end{definition}
After observing that $G=\mathrm{G}_p^\infty(K)$ is a Schur $\sigma$-group, one of the main results in \cite{KoVe} is that such a group must be infinite if $d \geq 3$. In particular, this means that the $p$-class tower of $K$ is infinite whenever the generator rank of $\mathrm{Cl}_p(K)$ is greater than or equal to $3$, and that $d = 1$ or $2$ must hold whenever $K$ has a finite tower. If $d= 1$, then $G$ is a finite cyclic group and so the associated tower has length $1$. We conclude that all finite towers of length greater than $1$ must have associated Galois group $G$ with $d = 2$.  In general, it is difficult to find finite towers of any appreciable length. For $p = 2$, examples  of length $3$ have been given in~\cite{Bu1}. In this paper, we give the first examples of length $3$ where $p = 3$ (the odd prime case).

The maximal subgroups in $G$ are normal of index $p$ and are thus in one-to-one correspondence with the maximal subgroups in $G / \Phi(G)$ where $\Phi(G) = G^p [G,G]$ is the Frattini subgroup. It follows that there are  $m = (p^d - 1)/(p - 1)$ such subgroups in $G$ and we denote them by $M_1, \ldots, M_m$. Each subgroup $M_i$ corresponds to an unramified cyclic extension of $K$ of degree $p$ which we denote by $K_i$. As noted earlier, we have $M_i^{ab} \cong \mathrm{Cl}_p(K_i)$. We now recall some facts about the transfer mapping and some terminology and notation introduced by the second author in earlier work~\cite{Ma1,Ma2}.

Given a group $G$, a subgroup $H$ of finite index $n$ and a left transversal $g_1, \ldots, g_n$ of $H$ in $G$, the transfer (or Verlagerung) map $V: G^{ab} \rightarrow H^{ab}$ is defined by $V(g G^{(1)}) = h_1 h_2 \ldots h_n H^{(1)}$ where $h_i \in H$ satisfies $g g_i = g_{\sigma(i)} h_i$ for some $\sigma \in S_n$. One can check that the map $V$ is well defined and does not depend on the choice of left transversal. The same map is also obtained if one uses right transversals.

\begin{definition}
Given a $d$-generated pro-$p$ group $G$ with maximal subgroups $M_1, \ldots, M_m$, the {\em transfer target type (TTT) of $G$} (denoted $\tau(G)$) is the sequence of abelianizations $M_1^{ab}, \ldots, M_m^{ab}$. The {\em transfer kernel type (TKT) of $G$} (denoted $\kappa(G)$) is the sequence $N_1, \ldots, N_m$ where $N_i$ is the kernel of the transfer map $V_i : G^{ab} \rightarrow M_i^{ab}$ for $1 \leq i \leq m$.
\end{definition}

\begin{remark}\label{equivalence}
The sequences appearing in the definitions of $\tau(G)$ and $\kappa(G)$ depend on the initial choice of an ordering on the subgroups $M_1, \ldots, M_m$. We will regard sequences obtained by making different choices as equivalent and will view $\tau(G)$ and $\kappa(G)$ as equivalence classes represented by the specified sequences.
\end{remark}

\begin{definition}
If $K$ is a number field and $p$ is some fixed prime then we define $\tau(K)$ to be the transfer target type of the Galois group $G = \mathrm{G}_p^\infty(K)$. We define $\kappa(K)$ similarly.
\end{definition}

\begin{remark}
As noted earlier, the components $M_i^{ab}$ of $\tau(K)$ can be evaluated by computing $\mathrm{Cl}_p(K_i)$ for the unramified cyclic extensions $K_i$ of degree $p$ over $K$. Class field theory also provides an arithmetic interpretation for the transfer kernels. The transfer map $V_i: G^{ab} \rightarrow M_i^{ab}$ corresponds to the extension homomorphism $\mathrm{Cl}_p(K) \rightarrow \mathrm{Cl}_p(K_i)$ and thus the kernel of $V_i$ can be viewed as the subgroup of ideal classes which become trivial when extended to the larger class group. (These classes are said to capitulate.)

\end{remark}

For the remainder of this section, we restrict attention to the situation where $p = 3$ and $K$ is an imaginary quadratic field with $\mathrm{Cl}_3(K) \cong [3,3]$ or, equivalently, $G^{ab} \cong [3,3]$. Here we are using the standard convention of listing the orders of cyclic factors in a direct sum decomposition to specify an abelian group. Since $d = 2$, there are four maximal subgroups  thus $\tau(G)$ and $\kappa(G)$ are sequences of length $4$. Observe that each component of the $TKT$ is a subgroup of $[3,3]$. There are five possible nontrivial subgroups. For ease of reference, the four subgroups of order $3$ (which correspond to the maximal subgroups in $G$) will be labeled $1, \ldots, 4$ and the whole group $[3,3]$ will be labeled $0$. We do not introduce a label for the trivial subgroup since it will not arise in our later considerations as a consequence of Hilbert's Theorem 94. Having fixed an ordering on the maximal subgroups, a $TKT$ can now be specified by listing the kernels of the corresponding transfer maps, in order, using these labels.

\begin{example}
The field $K = \mathbb{Q}(\sqrt{-9748})$ has $\mathrm{Cl}_3(K) \cong [3,3]$. After computing the $4$ unramified cyclic extensions $K_1, \ldots, K_4$ using Magma~\cite{MAGMA}, one can compute their $3$-class groups. These turn out to be: $[3,9]^3$, $[9,27]$ where the exponent indicates repeated occurrences of the same invariants. If one now computes the kernels of the extension homomorphisms $\mathrm{Cl}_3(K) \rightarrow \mathrm{Cl}_3(K_i)$ then one obtains the kernels of the transfer maps  $V_i : G^{ab} \rightarrow M_i^{ab}$ for $1 \leq i \leq 4$. For our chosen ordering of the subgroups, we obtained $\ker V_1 = M_1$, $\ker V_2 = M_4$, $\ker V_3 = M_3$ and $\ker V_4 = M_1$. Using the indices of the subgroups $M_i$ as the labels we can summarize this information by writing $\kappa(K) = (1,4,3,1)$. As noted in Remark~\ref{equivalence} above, this tuple depends on the chosen labeling and should be viewed as an equivalence class representative. As an illustration, if the subgroup $M_2$ was denoted $3$ and the subgroup $M_3$ was denoted $2$ then we would see that $(1, 4, 3, 1) \sim (1,2,4,1)$. If instead we swapped the labels on $M_1$ and $M_2$, then we would see that $(1,4,3,1) \sim (4, 2, 3, 2)$.
\end{example}

\section{The $p$-group generation algorithm}
The main result in the next section will be verified computationally using a backtrack search based on the $p$-group generation algorithm. In this section, we will give a brief overview of the algorithm and explain how the arithmetic restrictions discussed in the previous section can be used to constrain the search. The idea of using the $p$-group generation algorithm together with arithmetic information to search for certain Galois groups first appears in~\cite{BoLG}. Further examples can be found in~\cite{ BaBu,  BBH, BoNo, Bu1, No}.

\begin{definition}
The {\em lower exponent-$p$ central series of a finite $p$-group $G$} is defined recursively by $P_0(G) = G$ and $P_n(G) = P_{n-1}(G)^p [G, P_{n-1}(G)]$ for $n \geq 1$. If $P_{c}(G) = 1$ and $P_{c-1}(G) \neq 1$ then we say that $G$ has {\em $p$-class~$c$}.
\end{definition}

\begin{remark} If $G$ is a pro-$p$ group then we can extend the definitions by taking the closures of the corresponding subgroups. If $G$ is finitely generated then the subgroups in the lower exponent-$p$ central series are of finite index and so are also open subgroups. Thus a finitely generated pro-$p$ group has finite $p$-class if and only if it is a finite $p$-group.
\end{remark}

If $G$ has $p$-class $k$ and $1 \leq c \leq k$ then it is straightforward to show that the quotient $G/P_c(G)$ has $p$-class $c$. Moreover, this is the maximal possible quotient with this $p$-class.

\begin{definition}
If $G$ and $Q$ are finite $p$-groups and $G/P_c(G) \cong Q$ then we say that $G$ is a {\em descendant of $Q$} and that $Q$ is an {\em ancestor of $G$}. If $G$ has $p$-class $c+1$ then we say that it is an {\em immediate descendant of $Q$.} 
\end{definition}

Fixing $d$ we can visualize the finite $d$-generated $p$-groups  as arranged in a tree with the elementary abelian $p$-group of rank $d$ as the root and the groups of successively larger $p$-class in levels below. We connect two groups with an edge if one is an immediate descendant of the other. The $p$-group generation algorithm~\cite{Ob} allows one to compute all groups in  this tree  down to any desired $p$-class. The main idea behind the algorithm is that each finite $p$-group $G$ has only finitely many immediate descendants and these all arise as quotients of a certain uniquely determined covering group $G^*$. Moreover, there is an effective algorithm for computing this covering group and determining when two normal subgroups give rise to the same quotient. Given a presentation $G = F/R$, the covering group is defined $G^* = F/R^*$ where $R^* = [F,R] R^p$. Two quantities that play an important role in the algorithm are the $p$-multiplicator, which is the subgroup $R/R^*$ of $G^*$, and the nucleus, which is the subgroup $P_c(G^*) \cong P_c(F) R^*/ R^*$ where $c$ is the $p$-class of $G$.
For a more detailed description of the algorithm see \cite{Ob} and \cite[Chapter 9, pp. 353--372]{Ho}. 

The remaining results in this section explain how constraints on the structure of a group $G$ place constraints on the structure of its ancestors $G_c = G/P_c(G)$. The first lemma has been used explicitly or implicitly in a number of papers applying $p$-group generation to the computation of Galois groups.

\begin{lemma}\label{lem-epiab}
Let $f : G \rightarrow Q$ be a group epimorphism.
Let $N$ be a subgroup of finite index in $Q$ and $M = f^{-1}(N)$. Then $N^{ab}$ is a quotient of $M^{ab}$.
\end{lemma}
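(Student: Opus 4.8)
The plan is to exhibit a natural surjection $M^{ab} \twoheadrightarrow N^{ab}$ induced by the restriction of $f$ to $M$. First I would observe that, since $M = f^{-1}(N)$ and $f$ is surjective, the restriction $f|_M : M \to N$ is itself surjective: given $n \in N$, pick any $g \in G$ with $f(g) = n$; then $g \in f^{-1}(N) = M$, so $n = f|_M(g)$ lies in the image. Thus we have an epimorphism $f|_M : M \to N$.

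Next I would compose this with the canonical projection $N \to N^{ab} = N/[N,N]$ to obtain an epimorphism $\pi : M \to N^{ab}$. Since $N^{ab}$ is abelian, $\pi$ kills all commutators $[m_1,m_2]$ with $m_i \in M$, hence kills the commutator subgroup $[M,M]$ (and, in the pro-$p$ setting, its closure, since $\pi$ is continuous and $N^{ab}$ is Hausdorff). Therefore $\pi$ factors through the quotient $M/[M,M] = M^{ab}$, yielding an epimorphism $M^{ab} \twoheadrightarrow N^{ab}$. This exhibits $N^{ab}$ as a quotient of $M^{ab}$, as claimed.

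There is essentially no obstacle here: the only point requiring a moment's care is the surjectivity of $f|_M$, which relies crucially on the hypothesis $M = f^{-1}(N)$ (it would fail for an arbitrary subgroup $M$ mapping into $N$), and — in the topological category — the remark that continuity of $f$ ensures $f|_M$ is again a continuous epimorphism of pro-$p$ groups so that the induced map on abelianizations (defined using closed commutator subgroups) is well defined. The finiteness of the index of $N$ in $Q$ is not actually needed for this particular statement, though it is the setting in which the lemma is applied.
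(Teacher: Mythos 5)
Your argument is correct and is essentially identical to the paper's own proof: both compose the surjection $f|_M : M \to N$ (which uses $M = f^{-1}(N)$) with the projection $N \to N^{ab}$ and factor the result through $M^{ab}$. The extra remarks about continuity and the dispensability of the finite-index hypothesis are accurate but not needed.
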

\begin{proof}
This is clear since the composition of the restriction of $f$ from $M$ to $f(M) = N$ and the natural epimorphism $N \rightarrow N^{ab}$ is a surjection from $M$ onto the abelian group $N^{ab}$ and must factor through $M^{ab}$.
\end{proof}
\begin{corollary}\label{cor-aqi}
Let $H$ be an ancestor of $G$. Then $H^{ab}$ is a quotient of $G^{ab}$ and the abelian groups appearing in $\tau(H)$ must be quotients of those appearing in $\tau(G)$.
\end{corollary}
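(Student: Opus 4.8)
The plan is to deduce Corollary~\ref{cor-aqi} directly from Lemma~\ref{lem-epiab} by specializing the epimorphism. Since $H$ is an ancestor of $G$, there is some $c$ with $G/P_c(G) \cong H$, so we have a natural epimorphism $f : G \rightarrow H$. Taking $N = Q = H$ (equivalently $M = G$) in the lemma immediately gives that $H^{ab}$ is a quotient of $G^{ab}$, which is the first assertion.

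For the statement about transfer target types, the key observation is that the correspondence between maximal subgroups is compatible with $f$. First I would note that $f$ induces an isomorphism $G/\Phi(G) \xrightarrow{\sim} H/\Phi(H)$ (because $P_1(G) = \Phi(G)$ and $P_c(G) \le P_1(G)$ for $c \ge 1$, so $f$ is surjective with kernel inside $\Phi(G)$), and hence a bijection between the maximal subgroups of $G$ and those of $H$: explicitly, if $M_1^H, \ldots, M_m^H$ are the maximal subgroups of $H$, then $M_i^G := f^{-1}(M_i^H)$ runs over the maximal subgroups of $G$. Now apply Lemma~\ref{lem-epiab} with $Q = H$, $N = M_i^H$, and $M = f^{-1}(M_i^H) = M_i^G$: this yields that $(M_i^H)^{ab}$ is a quotient of $(M_i^G)^{ab}$. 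Since, under a suitable choice of ordering, the components of $\tau(H)$ are exactly the $(M_i^H)^{ab}$ and the components of $\tau(G)$ are the $(M_i^G)^{ab}$, every abelian group appearing in $\tau(H)$ is a quotient of the corresponding one in $\tau(G)$, as claimed.

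There is essentially no hard obstacle here; the corollary is a formal consequence of the lemma. The one point requiring a little care is matching up the orderings so that ``corresponding'' components are compared — this is exactly the content of Remark~\ref{equivalence}, which lets us work with $\tau(G)$ and $\tau(H)$ up to the equivalence induced by reordering the maximal subgroups, so it suffices to exhibit one ordering (namely the one transported along the bijection $M_i^H \mapsto f^{-1}(M_i^H)$) for which the quotient relation holds componentwise. A secondary routine check is that $f$ really does restrict to an epimorphism $M_i^G \to M_i^H$, which holds because $f$ is surjective and $M_i^G = f^{-1}(M_i^H)$ by definition; this is precisely the hypothesis needed to invoke Lemma~\ref{lem-epiab}.
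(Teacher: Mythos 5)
Your proposal is correct and matches the paper's intent exactly: the paper gives no separate proof of Corollary~\ref{cor-aqi}, treating it as an immediate consequence of Lemma~\ref{lem-epiab} applied to the natural epimorphism $G \to G/P_c(G) \cong H$ and to the preimages of the maximal subgroups of $H$, which is precisely your argument. Your added care about the Frattini-quotient bijection between maximal subgroups and the choice of ordering is a correct (and slightly more explicit) spelling-out of what the paper leaves implicit.
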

As a simple example of how this result will be helpful, if we're searching for groups $G$ with $G^{ab}\cong [3,3]$ and we encounter a group $H$ with $H^{ab} \cong [3,9]$ or $[3,3,3]$ (or larger still) then we can eliminate $H$ and all of its descendants from our search. 

The next lemma and subsequent corollary are the new ingredients in this paper and play a crucial role in the final stages of the proof of our main result.
\begin{lemma}
Let $f : G \rightarrow Q$ be a group epimorphism.
Let $N$ be a subgroup of finite index in $Q$ and $M = f^{-1}(N)$. We have induced maps $\fbar: G^{ab} \rightarrow Q^{ab}$ and $\fbar: M^{ab} \rightarrow N^{ab}$. If we let $V_G: G^{ab} \rightarrow M^{ab}$ and $V_Q: Q^{ab} \rightarrow N^{ab}$ denote the transfer maps then
\[   V_Q \circ \fbar = \fbar \circ V_G. \]
\end{lemma}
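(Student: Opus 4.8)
The plan is to evaluate both sides by choosing transversals in $G$ and $Q$ that are compatible with $f$. First I would fix a left transversal $g_1, \dots, g_n$ of $M$ in $G$, where $n = [G:M]$. Since $f$ is surjective and $\ker f \subseteq f^{-1}(N) = M$, one checks that $f(M) = N$ and that $f(g_1), \dots, f(g_n)$ is a left transversal of $N$ in $Q$; in particular $[Q:N] = n$. Indeed $Q = f(G) = \bigcup_i f(g_i) f(M) = \bigcup_i f(g_i) N$, and if $f(g_i) N = f(g_j) N$ then $g_i^{-1} g_j \in f^{-1}(N) = M$, which forces $i = j$. Recall also that $\fbar : G^{ab} \to Q^{ab}$ and $\fbar : M^{ab} \to N^{ab}$ are the well-defined maps $gG^{(1)} \mapsto f(g)Q^{(1)}$ and $mM^{(1)} \mapsto f(m)N^{(1)}$, the latter making sense because $f(M^{(1)}) \subseteq N^{(1)}$.

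Next I would push the defining equations of $V_G$ through $f$. Given $g \in G$, write $g g_i = g_{\sigma(i)} h_i$ with $h_i \in M$ and $\sigma \in S_n$, so that $V_G(g G^{(1)}) = h_1 \cdots h_n\, M^{(1)}$. Applying $f$ yields $f(g) f(g_i) = f(g_{\sigma(i)}) f(h_i)$ with $f(h_i) \in N$ and \emph{the same} permutation $\sigma$, because $f(g)f(g_i) = f(g_{\sigma(i)})f(h_i) \in f(g_{\sigma(i)})N$. Hence, evaluating $V_Q$ with respect to the transversal $f(g_1), \dots, f(g_n)$, we get $V_Q(f(g)Q^{(1)}) = f(h_1)\cdots f(h_n)\,N^{(1)} = f(h_1 \cdots h_n)\,N^{(1)}$. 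On the other hand $\fbar\bigl(V_G(gG^{(1)})\bigr) = \fbar\bigl(h_1\cdots h_n\,M^{(1)}\bigr) = f(h_1 \cdots h_n)\,N^{(1)}$, while $V_Q\bigl(\fbar(gG^{(1)})\bigr) = V_Q(f(g)Q^{(1)})$. Comparing, the two composites agree on every generator $gG^{(1)}$ of $G^{ab}$, hence as homomorphisms, which is the claimed identity $V_Q \circ \fbar = \fbar \circ V_G$.

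The one point requiring care is that $V_Q$ is defined independently of the choice of transversal; this is quoted in the text, so I may invoke it freely, and it guarantees that evaluating $V_Q$ on the particular transversal $f(g_1),\dots,f(g_n)$ computes the genuine transfer map. Beyond that there is essentially no real obstacle: the argument is bookkeeping, and the key observation is simply that an epimorphism carries a transversal of $M$ in $G$ to a transversal of $N$ in $Q$ while preserving the permutation $\sigma$ occurring in the definition of the transfer.
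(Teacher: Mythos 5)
Your proof is correct and follows essentially the same route as the paper's: push a left transversal of $M$ in $G$ through $f$ to get a left transversal of $N$ in $Q$, observe that the defining equations of the transfer (including the permutation $\sigma$) are carried over by $f$, and compare the two composites on each coset $gG^{(1)}$. The only difference is that you spell out in more detail why $f(g_1),\dots,f(g_n)$ is a transversal of $N$ in $Q$, which the paper asserts without proof.
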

\begin{proof}
Let $g_1, \ldots, g_n$ be a left transversal for $M$ in $G$ where $n = [Q : N] = [G : M]$ and define $q_i = f(g_i)$ for $1 \leq i \leq n$. Then $q_1, \ldots, q_n$ form a left transversal for $N$ in $Q$. 

Let $g \in G$ and suppose that $g g_i = g_{\sigma(i)} m_i$ for some $\sigma \in S_n$ and $m_i \in M$. From the definition of $V_G$, we have 
\[ (\fbar \circ V_G)(g G^{(1)}) = \fbar(m_1 m_2 \ldots m_n M^{(1)}) = f(m_1) \ldots f(m_n) N^{(1)}.
\]
On the other hand, we have
\[  (V_Q \circ \fbar)(g G^{(1)}) =   V_Q( f(g) Q^{(1)} ) = f(m_1) \ldots f(m_n) N^{(1)}
\]
since $f(g) q_i = f(g g_i) = f(g_{\sigma(i)} m_i) = q_{\sigma(i)} f(m_i)$ for $1 \leq i \leq n$. It follows that 
$V_Q \circ \fbar = \fbar \circ V_G$.
\end{proof}
\begin{corollary}\label{cor-tkt}
Suppose that, in addition to the conditions specified in the preceding lemma, the map  $\fbar: G^{ab} \rightarrow Q^{ab}$ is an isomorphism. Then $\ker V_G \subseteq \ker V_Q$ (using the isomorphism to view these as subgroups of the same group)  and so each subgroup appearing in $\kappa(G)$ must be contained in the corresponding subgroup in $\kappa(Q)$.
\end{corollary}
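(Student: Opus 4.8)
The plan is to reduce everything to the identity $V_Q \circ \fbar = \fbar \circ V_G$ proved in the preceding lemma. First I would establish the single-subgroup inclusion: given $x \in \ker V_G$, the lemma gives $V_Q(\fbar(x)) = \fbar(V_G(x)) = \fbar(1) = 1$, so $\fbar(x) \in \ker V_Q$; thus $\fbar(\ker V_G) \subseteq \ker V_Q$, and since $\fbar : G^{ab} \to Q^{ab}$ is an isomorphism, transporting $\ker V_G$ along it places it inside $\ker V_Q$. This identification is exactly the meaning of the parenthetical ``viewing these as subgroups of the same group'' in the statement.

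Next I would pass from one maximal subgroup to the whole tuple $\kappa$. For this I first need the maximal subgroups of $G$ and $Q$ to be in a canonical bijection. Since $G/\Phi(G) \cong G^{ab}/(G^{ab})^p$ and likewise for $Q$, the isomorphism $\fbar$ descends to an isomorphism $G/\Phi(G) \to Q/\Phi(Q)$; as $f$ itself already induces a surjection between these two finite groups, which therefore have the same order, that surjection is an isomorphism and in particular $\ker f \subseteq \Phi(G)$. Hence $\ker f$ is contained in every maximal subgroup $M_i$ of $G$, so $f(M_i)$ is a maximal subgroup of $Q$, $f^{-1}(f(M_i)) = M_i$, and $M_i \mapsto f(M_i)$ is the bijection under which the $i$-th entries of $\kappa(G)$ and $\kappa(Q)$ are declared to correspond. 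Applying the first step with $M = M_i$ and $N = f(M_i)$ — for which the relevant transfer maps are precisely the $V_i$ defining $\kappa(G)$ and $\kappa(Q)$ — shows that the $i$-th entry of $\kappa(G)$ is contained in the $i$-th entry of $\kappa(Q)$, which is the assertion.

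Both steps are short diagram chases, so I do not expect a genuine obstacle. The point needing the most care is the bookkeeping in the second paragraph: verifying $\ker f \subseteq \Phi(G)$ so that the maximal subgroups really do correspond canonically and the phrase ``corresponding subgroup'' is well defined, and keeping straight which of the two maps written $\fbar$ — the one on the abelianizations of $G$ and $Q$ or the one on the abelianizations of $M$ and $N$ — is being invoked at each application of the lemma.
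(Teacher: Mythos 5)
Your first paragraph is exactly the paper's one-line proof, just phrased element-wise: the paper writes $\ker V_G = V_G^{-1}(1) \subseteq (\fbar \circ V_G)^{-1}(1) = \fbar^{-1}(\ker V_Q)$ and invokes the isomorphism, which is the same computation. Your second paragraph's verification that $\ker f \subseteq \Phi(G)$, so that $M_i \mapsto f(M_i)$ gives the canonical bijection of maximal subgroups making ``corresponding subgroup'' well defined, is correct bookkeeping that the paper leaves implicit; the proposal is sound and follows the same route.
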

\begin{proof} This follows since
\[ \ker V_G = V_G^{-1}(1) \subseteq (\fbar \circ V_G)^{-1}(1) = \fbar^{-1} (\ker V_Q).
\]
\end{proof}

Next we have a statement about $\sigma$-automorphisms.
\begin{lemma}\label{lem-sigma}
Let $G$ be a pro-$p$ group and let $f : G \rightarrow G_c$ be the natural epimorphism.
If $G$ possesses an automorphism $\sigma$ which has order $2$ and which restricts to the inversion automorphism on $G^{ab}$ then so does $G_c$.
\end{lemma}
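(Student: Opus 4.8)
The plan is to use the fact that $P_c(G)$ is a characteristic subgroup of $G$, so that $\sigma$ descends to an automorphism of the quotient $G_c = G/P_c(G)$, and then to check that this descended automorphism retains the two defining properties (order $2$, and inversion on the abelianization).

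First I would check, by induction on $n$ using the recursion $P_n(G) = P_{n-1}(G)^p[G,P_{n-1}(G)]$, that $P_n(G)$ is invariant under every continuous automorphism of $G$. The case $n = 0$ is trivial, and the inductive step is immediate because an automorphism carries $p$-th powers to $p$-th powers and commutators to commutators (and, being continuous, preserves closures in the pro-$p$ setting). In particular $\sigma\bigl(P_c(G)\bigr) = P_c(G)$, so $\sigma$ induces an automorphism $\sigma_c$ of $G_c$, and $\sigma_c^2 = \mathrm{id}$ since $\sigma^2 = \mathrm{id}$; hence $\sigma_c$ has order $1$ or $2$.

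Next I would verify that $\sigma_c$ restricts to inversion on $G_c^{ab}$. The natural epimorphism $f : G \to G_c$ induces a surjection $\overline{f} : G^{ab} \to G_c^{ab}$ which is equivariant with respect to the automorphisms induced by $\sigma$ and $\sigma_c$ on these abelianizations. Given $x \in G_c^{ab}$, choose $y \in G^{ab}$ with $\overline{f}(y) = x$; then $\sigma_c$ sends $x$ to $\overline{f}(y^{-1}) = x^{-1}$, using that $\sigma$ acts as inversion on $G^{ab}$. So $\sigma_c$ acts as inversion on $G_c^{ab}$, which is the remaining condition.

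The one point that genuinely needs an argument, rather than bookkeeping, is to rule out $\sigma_c = \mathrm{id}$, which would leave $\sigma_c$ of order $1$ instead of $2$. Here I would use the previous step: if $\sigma_c$ were the identity, then inversion would be trivial on $G_c^{ab}$, forcing $G_c^{ab}$ to have exponent dividing $2$; as $G_c^{ab}$ is a quotient of the (pro-)$p$ group $G$ with $p$ odd, this can only happen when $G_c^{ab}$, and hence $G_c$, is trivial --- a case in which the statement is vacuous. Thus whenever $G_c \neq 1$ the automorphism $\sigma_c$ has order exactly $2$, and the lemma follows. I expect this last step to be the subtle one: it is exactly where the oddness of $p$ is used, and the statement can genuinely fail for $p = 2$ (for instance $G = \mathbb{Z}/4$ with $\sigma$ the inversion map has $G_1 \cong \mathbb{Z}/2$, which admits no automorphism of order $2$).
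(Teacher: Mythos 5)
Your proof is correct and follows essentially the same route as the paper's: $P_c(G)$ is characteristic, so $\sigma$ descends to $G_c$, and the induced automorphism acts as inversion on $G_c^{ab}$ because the map $G^{ab} \to G_c^{ab}$ is equivariant, which forces the order to be exactly $2$. Your explicit handling of the degenerate case (ruling out $\sigma_c = \mathrm{id}$ using that $p$ is odd, with the $\mathbb{Z}/4$ example showing the statement can fail for $p=2$) is a point the paper's proof passes over silently, and is a worthwhile clarification rather than a deviation in method.
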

\begin{proof}
The subgroup $P_c(G)$ is characteristic for all $c$ so $\sigma$ has a well defined restriction to $G_c$ whose order must be either $1$ or $2$. Observe that $\sigma$ can be restricted further to $G_c^{ab}$ and that the restriction map on automorphisms passes through $G^{ab}$. Since the restriction of $\sigma$ to $G^{ab}$ is the inversion map, the same statement holds for $G_c^{ab}$ and so the restriction has order $2$.
\end{proof}

We also need the following results. The first is proved in~\cite[Proposition 2]{BoNo}. The second appears in~\cite[Theorem 4.4]{No}. We outline the proof of the second result for the reader's convenience.
\begin{proposition}\label{prop-rel-bd}
Let $G$ be a pro-$p$ group with finite abelianization. For all $c \geq 1$ the difference between the ranks of the $p$-multiplicator and the nucleus of $G_c$ is at most $r(G) = \dim H^2(G, \mathbb{F}_p)$.
\end{proposition}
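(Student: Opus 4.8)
The plan is to work with a fixed minimal free pro-$p$ presentation of $G$ and to translate the ranks of the $p$-multiplicator and the nucleus of $G_c$ into the language of that presentation, where the asserted bound becomes essentially a monotonicity statement. Concretely, I would fix $G = F/R$ with $F$ free pro-$p$ of rank $d = d(G)$ (finite, since $G^{ab}$ is finite), and recall the standard identification of $r(G) = \dim_{\mathbb{F}_p} H^2(G,\mathbb{F}_p)$ with the minimal number of elements needed to generate $R$ as a closed normal subgroup of $F$, which in turn equals $\dim_{\mathbb{F}_p} R/R^*$ for $R^* = [F,R]R^p$. Since $P_c(G) = P_c(F)R/R$, the quotient $G_c = G/P_c(G)$ is then $F/S$ with $S := R\,P_c(F)$, a closed normal subgroup of $F$ containing $R$; note $G_c$ is finite because $P_c(F)$ is open in the finitely generated pro-$p$ group $F$, so in particular $S$ has finite index in $F$ and all the $\mathbb{F}_p$-vector spaces below will be finite-dimensional.

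Next I would identify the difference of the two ranks. By the description of the $p$-group generation algorithm recalled above, the $p$-covering group of $G_c = F/S$ is $F/S^*$ with $S^* = [F,S]S^p$, its $p$-multiplicator is $S/S^*$, and its nucleus is $P_{c'}(F)S^*/S^*$, where $c'$ is the $p$-class of $G_c$ (one has $c' = c$ whenever $G$ has $p$-class at least $c$, and in all cases $P_{c'}(F) \supseteq P_c(F)$, while $P_{c'}(F) \subseteq S$ because $P_{c'}(G_c) = 1$). Since the nucleus is contained in the $p$-multiplicator, the difference of their ranks is $\dim_{\mathbb{F}_p} S/(P_{c'}(F)S^*)$. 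Using that $P_{c'}(F)S^*$ is closed and normal in $F$ with $R\cdot(P_{c'}(F)S^*) = S$ (here $R\cdot P_{c'}(F)S^* \supseteq R\,P_c(F) = S$ and $\subseteq S$ since each of $R$, $P_{c'}(F)$, $S^*$ lies in $S$), the second isomorphism theorem in the pro-$p$ category gives
\[
  \frac{S}{P_{c'}(F)S^*} \;\cong\; \frac{R}{R\cap P_{c'}(F)S^*}.
\]

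To finish, I would exhibit $R/(R\cap P_{c'}(F)S^*)$ as a quotient of $R/R^*$. Applying monotonicity of $[F,-]$ and of $p$-th powers to the inclusion $R\subseteq S$ yields $R^* = [F,R]R^p \subseteq [F,S]S^p = S^* \subseteq P_{c'}(F)S^*$, hence $R^* \subseteq R\cap P_{c'}(F)S^*$. Consequently
\[
  \dim_{\mathbb{F}_p}\frac{R}{R\cap P_{c'}(F)S^*} \;\le\; \dim_{\mathbb{F}_p}\frac{R}{R^*} \;=\; r(G),
\]
which is exactly the stated inequality.

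I expect the main difficulty to be organizational rather than conceptual: one must verify that every subgroup appearing ($R$, $S$, $S^*$, $P_{c'}(F)$, $P_{c'}(F)S^*$) is closed and normal in $F$ so that the isomorphism theorems apply verbatim in the pro-$p$ setting, and one relies on the standard but nontrivial identification $r(G) = \dim_{\mathbb{F}_p} R/[F,R]R^p$, which I would cite from a reference on pro-$p$ presentations. The only genuine case distinction is the behaviour of the $p$-class $c'$ of $G_c$ when $G$ is itself finite of small $p$-class, and this is absorbed by the two observations $P_{c'}(F)\supseteq P_c(F)$ and $P_{c'}(F)\subseteq S$ recorded above; neither the value of $c'$ nor the finiteness of $r(G)$ affects the final estimate.
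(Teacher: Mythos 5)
Your argument is correct. Note that the paper does not prove this proposition itself but cites \cite[Proposition 2]{BoNo}; your proof --- passing to a minimal presentation $G=F/R$, writing $G_c=F/S$ with $S=RP_c(F)$, identifying the multiplicator-modulo-nucleus quotient with $S/(P_{c'}(F)S^*)\cong R/(R\cap P_{c'}(F)S^*)$, and observing $R^*\subseteq S^*$ so that this is a quotient of $R/R^*$ of dimension $r(G)$ --- is essentially the standard argument behind that citation, and all the needed verifications (minimality of $F/S$ since $S\le\Phi(F)$, normality and closedness of the subgroups involved, the case $c'<c$) are handled correctly.
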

\begin{theorem}\label{thm-nov}
Let $G$ be a pro-$p$ group, $N$ a finite index normal subgroup, and $V$ a word, and assume $P_c(G) \leq N$. If
$[G_c: V(N/P_c(G))] = [G_{c+1}: V(N/P_{c+1}(G))]$ then 
\[ \frac{N/P_c(G)} {V(N/P_c(G))} \cong N/V(N). \]
It follows that
\[ \frac{N/P_{c'}(G)} {V(N/P_{c'}(G))} \cong N/V(N) \]
for all $c' \geq c$.
\end{theorem}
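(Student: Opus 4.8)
The plan is to translate the index hypothesis into a single containment of subgroups of $G$, to show that this containment propagates up the lower exponent-$p$ central series, and then to pass to the limit; everything takes place inside $G$ and its quotients $G_k = G/P_k(G)$ (which are finite since $G$ is finitely generated, as in all our applications). Since $V$ is a word, a quotient map carries a verbal subgroup onto the verbal subgroup of the image, so for $k \ge c$ (where $P_k(G) \le P_c(G) \le N$) the subgroup $V(N/P_k(G))$, viewed inside $G_k$, is $V(N)P_k(G)/P_k(G)$; hence $[\,G_k : V(N/P_k(G))\,] = [\,G : V(N)P_k(G)\,]$ for $k = c, c+1$. As $P_{c+1}(G) \le P_c(G)$ forces $V(N)P_{c+1}(G) \le V(N)P_c(G)$, and these finite indices coincide by hypothesis, the two subgroups are equal, i.e.
\begin{equation*}
P_c(G) \subseteq V(N)\,P_{c+1}(G). \tag{$\ast$}
\end{equation*}
I would also record here that $V(N)$ is fully invariant in the normal subgroup $N$ and therefore normal in $G$, so each $V(N)P_k(G)$ really is a subgroup.

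The crux is the propagation step: \emph{if $P_j(G) \subseteq V(N)P_{j+1}(G)$ then $P_{j+1}(G) \subseteq V(N)P_{j+2}(G)$.} To prove it I would pass to $\overline{G} = G/P_{j+2}(G)$, writing $W$ for the (normal) image of $V(N)$ and $A = P_{j+1}(G)/P_{j+2}(G)$. The defining relations of the series make $A$ central in $\overline{G}$ and of exponent dividing $p$, and the hypothesis reads $\overline{P}_j := P_j(G)/P_{j+2}(G) \subseteq WA$. For $x = wa \in \overline{P}_j$ with $w \in W$, $a \in A$ one then computes $x^p = w^p \in W$ (using that $a$ is central of order dividing $p$) and $[\overline{g}, x] = [\overline{g}, w] \in W$ for every $\overline{g} \in \overline{G}$ (using that $a$ is central and $W \trianglelefteq \overline{G}$). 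Since $P_{j+1}(G)$ modulo $P_{j+2}(G)$ equals $\overline{P}_j^{\,p}[\overline{G}, \overline{P}_j]$, both generating sets lie in $W$, so $A \subseteq W$; that is, $P_{j+1}(G) \subseteq V(N)P_{j+2}(G)$. This commutator bookkeeping is the step I expect to be the only real obstacle, and it is precisely the normality of $V(N)$ in all of $G$ (not merely in $N$) that makes it go through.

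Finally, iterating the propagation step starting from $(\ast)$ gives $P_c(G) \subseteq V(N)P_{c'}(G)$ for every $c' \ge c$. Because the subgroups $P_{c'}(G)$ form a neighborhood basis of the identity (here I use that $G$ is finitely generated), intersecting over $c'$ yields $P_c(G) \subseteq \overline{V(N)} = V(N)$. Consequently, for each $c' \ge c$ we have $P_{c'}(G) \subseteq P_c(G) \subseteq V(N)$, so $V(N)P_{c'}(G) = V(N)$ and
\[
\frac{N/P_{c'}(G)}{V(N/P_{c'}(G))} \;\cong\; \frac{N}{V(N)P_{c'}(G)} \;=\; \frac{N}{V(N)},
\]
which gives both assertions of the theorem (the first being the case $c' = c$).
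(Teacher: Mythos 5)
Your proof is correct and arrives at the theorem by the same essential reduction as the paper: everything hinges on showing that the index hypothesis forces $P_c(G) \leq V(N)$, after which both displayed isomorphisms follow at once from $V(N)P_{c'}(G) = V(N)$. Where you differ is in how that containment is extracted. The paper passes to the quotient $H = G/V(N)$, identifies $G_c/V(N/P_c(G))$ with $H/P_c(H)$, and then invokes the standard fact that $P_c(H) = P_{c+1}(H)$ forces $P_c(H) = 1$ in a finitely generated pro-$p$ group. You stay inside $G$ and prove that stabilization property from scratch: your propagation step, with the computations $x^p = w^p$ and $[\bar g, wa] = [\bar g, w]$ for $a$ central of exponent dividing $p$, is exactly a hands-on proof that $P_j(H) = P_{j+1}(H)$ implies $P_{j+1}(H) = P_{j+2}(H)$, and your intersection argument supplies $\bigcap_{c'} P_{c'}(H) = 1$. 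So your version is more self-contained (it proves the lemma the paper uses as a black box) at the cost of length; note also that the normality of $V(N)$ in all of $G$, which you correctly isolate as the enabling fact, is likewise what lets the paper form $G/V(N)$ in the first place. Both arguments rely on the same unstated hypotheses --- that the indices in the hypothesis are finite and that $G$ is finitely generated, so that the $P_{c'}(G)$ are open with trivial intersection and $V(N)$ is closed --- and you are right to flag them; they hold in every application made in the paper.
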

\begin{proof}
We first remark that $V(G)$ denotes the verbal subgroup of $G$ generated using the word $V$.  Recall that verbal subgroups are characteristic and that if $f: G \rightarrow Q$ is an epimorphism then $f(V(G)) = V(Q)$.

Since $V(N)$ is characteristic in $N$ which is normal in $G$ we can form the quotient $G/V(N)$. It is straightforward to verify that
\[  \frac{G/V(N)}{P_c(G/V(N))} \cong  \frac{G}{P_c(G) V(N)} \cong \frac{G_c}{V(N / P_c(G))}
\]
for every $c \geq 1$. The hypotheses then imply that 
\[ \frac{G/V(N)}{P_c(G/V(N))} \cong \frac{G/V(N)}{P_{c+1}(G/V(N))}.
\]
This can only happen if $P_c(G/V(N)) = 1$ or, equivalently, $P_c(G) \leq V(N)$. But then $V(N/P_c(G)) = V(N)/P_c(G)$ which leads to an isomorphism 
\[ \frac{N/P_{c}(G)} {V(N/P_{c}(G))} \cong N/V(N). \]
This isomorphism factors through the natural epimorphism from $N/V(N)$ to $\frac{N/P_{c'}(G)} {V(N/P_{c'}(G))}$ for $c' \geq c$ yielding the final statement in the theorem.
\end{proof}
If $V$ is the commutator word $[x,y] = x^{-1} y^{-1} x y$ then $V(G) = [G,G] = G^{(1)}$ and we have the following corollary.
\begin{corollary}\label{cor-aqi-stab}
Let $G$ be a pro-$p$ group, $N$ a finite index normal subgroup with $P_c(G) \leq N$ and let $N_c = N/P_c(G) \leq G_c$. If
$N_c^{ab} \cong N_{c+1}^{ab}$ then $N_{c'}^{ab} \cong N^{ab}$ for all $c' \geq c$.
\end{corollary}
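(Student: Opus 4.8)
The plan is to derive Corollary~\ref{cor-aqi-stab} as a direct application of Theorem~\ref{thm-nov} with the word $V$ taken to be the commutator word $[x,y] = x^{-1}y^{-1}xy$. With this choice, as noted just before the corollary, $V(H) = [H,H] = H^{(1)}$ for any group $H$, so that $H/V(H) = H^{ab}$, and likewise $V(N/P_c(G)) = (N/P_c(G))^{(1)}$ so that $(N/P_c(G))\big/V(N/P_c(G)) = (N_c)^{ab} = N_c^{ab}$. Thus every instance of a quotient of the form $M/V(M)$ appearing in Theorem~\ref{thm-nov} becomes an abelianization.

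First I would check that the hypothesis $N_c^{ab} \cong N_{c+1}^{ab}$ of the corollary implies the index hypothesis $[G_c : V(N/P_c(G))] = [G_{c+1} : V(N/P_{c+1}(G))]$ required by Theorem~\ref{thm-nov}. Note that $[G_c : V(N/P_c(G))]$ splits as $[G_c : N_c]\cdot[N_c : V(N/P_c(G))] = [G_c : N_c]\cdot |N_c^{ab}|$. Since $N$ has finite index in $G$ and $P_c(G) \leq N \leq G$, we have $[G_c : N_c] = [G : N]$, which is independent of $c$; and by hypothesis $|N_c^{ab}| = |N_{c+1}^{ab}|$. Multiplying, the two indices agree. (All quantities here are finite: $G^{ab}$ finite forces $N^{ab}$ finite since $N$ has finite index, and the relevant quotients of $G$ are finite $p$-groups.)

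With both hypotheses of Theorem~\ref{thm-nov} verified for $V = [x,y]$, the conclusion gives
\[
N_{c'}^{ab} = \frac{N/P_{c'}(G)}{V(N/P_{c'}(G))} \cong N/V(N) = N^{ab}
\]
for all $c' \geq c$, which is precisely the assertion of the corollary.

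I do not expect any serious obstacle here; the content is entirely in Theorem~\ref{thm-nov}, and the corollary is a matter of specializing the word and unwinding the index bookkeeping. The only point requiring a little care is confirming that $[G_c : N_c] = [G : N]$ so that the change in $|N_c^{ab}|$ across one step of the lower exponent-$p$ central series is exactly the change in $[G_c : V(N/P_c(G))]$ — i.e. that the ``$[G:N]$'' factor is a genuine constant and not itself varying with $c$ — and this follows immediately from the third isomorphism theorem applied to $P_c(G) \leq N \leq G$.
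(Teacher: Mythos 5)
Your proof is correct and follows essentially the same route as the paper: both specialize Theorem~\ref{thm-nov} to the commutator word and reduce the index hypothesis to the hypothesis $N_c^{ab} \cong N_{c+1}^{ab}$ (the paper phrases this as an equivalence via Lemma~\ref{lem-epiab}, while you verify the needed direction directly by the factorization $[G_c : N_c^{(1)}] = [G:N]\cdot\lvert N_c^{ab}\rvert$, which is a minor and valid variation). One small caveat: your parenthetical claim that ``$G^{ab}$ finite forces $N^{ab}$ finite since $N$ has finite index'' is false for general pro-$p$ groups (this failure is exactly what permits infinite class field towers), but it is not needed --- the only finiteness your argument uses is that of $N_c^{ab}$ and $N_{c+1}^{ab}$, which holds because $G_c$ and $G_{c+1}$ are finite $p$-groups.
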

\begin{proof}
If $V(G) = G^{(1)}$ then the condition in Theorem~\ref{thm-nov} is $[G_c: N_c^{(1)}] = [G_{c+1}: N_{c+1}^{(1)}]$. Using Lemma~\ref{lem-epiab}, one can see that this is equivalent to $N_c^{ab} \cong N_{c+1}^{ab}$.
\end{proof}

\section {Criterion for a tower of length $3$}
We can now state our main result.
\begin{theorem}\label{main}
Let $G$ be a Schur $\sigma$-group satisfying:
\begin{itemize}
\item[(i)] $G^{ab} \cong [3,3]$
\item[(ii)] $\tau(G) = \{ [3,9]^3, [9,27] \}$ and
\item[(iii)] $\kappa(G)$ equivalent to $(1,4,3,1)$.
\end{itemize}
Then, up to isomorphism,  $G$ is one of two possible finite $3$-groups of order $3^8$. Both of these groups have derived length $3$.
\end{theorem}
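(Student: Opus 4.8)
The plan is to run the $p$-group generation algorithm as a backtrack search on the tree of finite $3$-generator... wait, $d=2$-generator $3$-groups, pruning aggressively using the three arithmetic constraints together with the Schur $\sigma$-group condition, and show that the search terminates with exactly two leaves. Concretely, I would start from the root $[3,3]$ (which matches (i)) and, at each node $Q$ encountered with $p$-class $c$, I would check: (a) $Q^{ab}\cong[3,3]$, using Corollary~\ref{cor-aqi} to discard $Q$ and its entire descendant subtree as soon as the abelianization grows; (b) that the abelian groups in $\tau(Q)$ are quotients of those in $\tau(G)=\{[3,9]^3,[9,27]\}$, again by Corollary~\ref{cor-aqi}, pruning otherwise; (c) that each subgroup in $\kappa(Q)$ contains the corresponding subgroup of $\kappa(G)$ whenever the relevant induced map on abelianizations is an isomorphism, by Corollary~\ref{cor-tkt} (here one must be slightly careful: the containment only applies once $\bar f$ is an isomorphism on the abelianizations, which holds once $c\ge 2$ since $G^{ab}\cong[3,3]$ has exponent $3$); (d) that $Q$ admits an order-$2$ automorphism inverting $Q^{ab}$, by Lemma~\ref{lem-sigma} this is necessary for $Q$ to be an ancestor of our $G$; and (e) the relation-rank bound: by Proposition~\ref{prop-rel-bd}, since $G$ is a Schur $\sigma$-group with $d=2$ we have $r(G)=2$, so at every node the rank of the $p$-multiplicator minus the rank of the nucleus must be at most $2$, which limits which immediate descendants can lie above $G$.

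The heart of the argument is to show this pruned search is \emph{finite}, i.e. that after finitely many levels every surviving branch either dies or stabilizes at a group of the right order. For this I would invoke Corollary~\ref{cor-aqi-stab} (the stabilization criterion coming from Theorem~\ref{thm-nov}): if at some $p$-class $c$ one finds a surviving candidate $Q=G_c$ such that, passing to level $c+1$, every immediate descendant $Q'$ with $(Q')^{ab}\cong[3,3]$ has the property that $N_c^{ab}\cong N_{c+1}^{ab}$ for the relevant normal subgroups — in particular for the four maximal subgroups, so that $\tau$ has stabilized — then $\tau(G)\cong\tau(Q)$ already, and combined with the target $\tau(G)=\{[3,9]^3,[9,27]\}$ this forces $G$ itself to have bounded order. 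Tracking the orders: $[3,3]$ has order $3^2$; the components of the TTT reach $[3,9]$ (order $3^3$) and $[9,27]$ (order $3^5$), and since $M_i^{ab}\cong M_i/M_i^{(1)}$ with $[G:M_i]=3$, the stabilized value of $|G|$ is pinned down — the computation yields $|G|=3^8$. So after carrying the search (in Magma, say) down far enough that all surviving nodes have either been eliminated or have $\tau$ stabilized to the target, one checks directly that exactly two groups of order $3^8$ remain, and that each of these is genuinely a Schur $\sigma$-group with the prescribed $\tau$ and $\kappa$.

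Finally, having isolated the two groups, I would compute their derived series directly (this is a finite check on explicit groups of order $3^8$) and observe $G^{(0)}\neq G^{(1)}\neq G^{(2)}\neq G^{(3)}=1$, i.e. derived length $3$; equivalently, since $\tau(G)$ shows $\mathrm{Cl}_3(K_i)$ is nonabelian-inducing for the unramified extensions (the $M_i$ are themselves nonabelian with nontrivial derived subgroup), the tower does not terminate at the first or second stage, and $|G|=3^8$ finite forces it to terminate at the third.

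The step I expect to be the main obstacle is establishing \emph{termination} rigorously: a priori the backtrack tree is infinite, and one must be certain that the combination of the TTT-constraint (via Corollary~\ref{cor-aqi}), the TKT-constraint (via Corollary~\ref{cor-tkt}), the $\sigma$-automorphism constraint (Lemma~\ref{lem-sigma}), and the relation-rank bound $r\le 2$ (Proposition~\ref{prop-rel-bd}) is enough to kill every infinite branch after finitely many levels. The delicate point is that Corollary~\ref{cor-aqi-stab} only gives stabilization \emph{once} one observes $N_c^{ab}\cong N_{c+1}^{ab}$; one has to argue that on every surviving branch this coincidence must eventually occur (or the branch dies), which is really where the relation-rank bound does its work — it bounds the nucleus ranks and hence eventually forces the multiplicator quotients relevant to $\tau$ to stop growing. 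Making this airtight, rather than merely "the computer search stopped," is the crux; the rest is bookkeeping and a finite explicit verification on the two resulting groups.
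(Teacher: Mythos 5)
Your overall strategy is the same as the paper's: a pruned descendant search from $[3,3]$ using Corollary~\ref{cor-aqi} (abelianizations and TTT components can only shrink in quotients), Corollary~\ref{cor-tkt} (transfer kernels can only grow in quotients), Lemma~\ref{lem-sigma} (the $\sigma$-automorphism passes to the quotients $G_c$), Proposition~\ref{prop-rel-bd} (the multiplicator/nucleus rank difference is bounded by $r(G)=2$), and Corollary~\ref{cor-aqi-stab}. But your handling of termination --- which you correctly identify as the crux --- contains a genuine gap. You propose that once $\tau$ stabilizes at the target value $\{[3,9]^3,[9,27]\}$ the order of $G$ is ``pinned down'' at $3^8$. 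This does not follow: $\lvert M_i\rvert=\lvert M_i^{ab}\rvert\cdot\lvert M_i^{(1)}\rvert$, and the TTT says nothing about $\lvert M_i^{(1)}\rvert$, so a group can have all four maximal-subgroup abelianizations stabilized and still admit proper descendants of unbounded order. In fact the paper uses Corollary~\ref{cor-aqi-stab} in the opposite direction: at $3$-class $3$ it \emph{kills} a branch whose TTT has stabilized at the \emph{wrong} value $[3,9]^4$, since stabilization means that branch can never reach the target TTT. Your hope that the relation-rank bound ``eventually forces the multiplicator quotients relevant to $\tau$ to stop growing'' on every surviving branch is not a theorem and is not how the argument closes.

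The actual termination is an empirical fact delivered by the $p$-group generation algorithm, not a theoretical consequence of the pruning lemmas: after the explicit walk ($G_2$ unique among $7$ descendants of $[3,3]$; one of $11$ descendants survives to be $G_3$; one of $4$ to be $G_4$; and of the $14$ descendants of $G_4$, exactly two groups of order $3^8$ survive all tests, after a side branch with TKT $(0,2,3,1)$ is excluded because its four descendants all have TTT $\{[3,9]^3,[27,27]\}$), one finds that both surviving candidates at $3$-class $5$ are \emph{terminal}, i.e.\ have no immediate descendants whatsoever. That is what rules out groups of $3$-class $6$ and bounds $G$; there is no a priori guarantee the search halts, and the theorem's content is precisely that for this constellation of invariants it does. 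You also omit the final step showing $G\ncong G_1,\dots,G_4$; the paper disposes of these by comparing TTTs and observing that they have relation rank $>2$, contradicting the Schur $\sigma$-group condition $r=d=2$. The closing derived-length check on the two explicit groups of order $3^8$ is fine as you describe it.
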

\begin{proof}
$G$ is $2$-generated so every finite quotient $G_c = G/P_c(G)$ is a descendant of $G_1 = G/P_1(G) \cong [3,3]$. At the outset, we do not know if there are infinite pro-$3$ groups or finite $3$-groups of arbitrarily large $3$-class satisfying the given conditions. If such groups $G$ having $3$-class at least $c$ exist then the quotient $G_c$ in each case will have $3$-class exactly $c$. We will proceed by finding all candidates for $G_c$ with $3$-class exactly $c$, for larger and larger values of $c$. This process will terminate allowing us to deduce that $G$ is finite with bounded $3$-class. We have used Magma~\cite{MAGMA} to carry out the needed descendant computations and to check various conditions.

First, $G^{ab} \cong [3,3]$ so by Corollary~\ref{cor-aqi} we can rule out all $3$-groups and their descendants whose abelianization is larger than $[3,3]$. Of the $7$ immediate descendants of $G_1 \cong [3,3]$, only two satisfy this restriction. Of these, only one possesses a $\sigma$-automorphism so by Lemma~\ref{lem-sigma} we can eliminate the other. We conclude that the unique group remaining must be $G_2$ up to isomorphism.

To find $G_3$ we need only consider the immediate descendants of $G_2$. There are $11$ such groups. Many of these have maximal subgroups of index $3$ with abelianization $[3,3,3]$. It follows that these groups and their descendants can be eliminated by Corollary~\ref{cor-aqi}. This leaves only five possible groups of $p$-class $3$. Three more of these can be eliminated because they fail to satisfy the bound in Proposition~\ref{prop-rel-bd} leaving only two possibilities for $G_3$. For both of these groups the TTT is $[3,9]^4$. Furthermore, one of these groups has two descendants both with exactly the same TTT. By Corollary~\ref{cor-aqi-stab}, these groups and all of their descendants have the same TTT and can be eliminated. This leaves only one possible candidate group for $G_3$. 

Our candidate for $G_3$ has $4$ immediate descendants only one of which has a Schur $\sigma$-automorphism so by Lemma~\ref{lem-sigma} this must be $G_4$. This in turn has $14$ immediate descendants all of which possess a $\sigma$-automorphism. However, only six of them satisfy the bound given in  Proposition~\ref{prop-rel-bd}. These remaining six groups of $3$-class $5$ all have order $3^8$ and TTT $ = \{ [3,9]^3, [9,27] \}$. Computing the TKTs for these groups, three of them (and their descendants) can be eliminated  using Corollary~\ref{cor-tkt}. Two of the groups that remain have TKT equivalent to $(1,4,3,1)$. The third has TKT $(0,2,3,1)$. This latter group (and its descendants) cannot be immediately ruled out since it might be possible that the subgroup represented by the first component becomes smaller in a descendant. In particular, the TKT $(2,2,3,1)$ or $(3,2,3,1)$ might appear both of which are equivalent to $(1, 4, 3, 1)$. Examining the immediate descendants more closely though, one sees that there are four of them and that they all have TTT $= \{ [3,9]^3, [27,27] \}$. This means that these groups and their descendants can be eliminated using Corollary~\ref{cor-aqi}. Thus there are only two candidates for $G_5$.

Since both of the candidates for $G_5$ do not have any descendants (they are said to be terminal), we conclude that there do not exist any finite groups of $3$-class $6$ satisfying all of the properties necessary for them to be $G_6$. It follows that $G$ is finite with $3$-class at most $5$. We can also see that $G \ncong G_1, G_2, G_3, G_4$ by comparing $TTT$s and/or observing that these groups all have relation rank $> 2$. It follows that $G \cong G_5$ and is one of the two candidate groups found above. Both groups have derived length $3$.

One of these groups has the following power-commutator presentation
\begin{eqnarray*}
G = \langle g_1, g_2, g_3, g_4, g_5, g_6, g_7, g_8  &\mid&
g_1^3 = g_5 g_6^2, \quad
g_2^3 = g_4 g_8, \quad
g_3^3 = g_6^2 g_7 g_8, \quad
g_4^3 = g_7^2, \quad
g_5^3 = g_8^2, \\
&&
\lbrack g_2, g_1 \rbrack = g_3, \quad
\lbrack g_3, g_1 \rbrack = g_4, \quad
\lbrack g_3, g_2 \rbrack = g_5, \quad 
\lbrack g_5, g_1 \rbrack = g_7, \\
&&
\lbrack g_5, g_2 \rbrack = g_6, \quad 
\lbrack g_5, g_3 \rbrack = g_7, \quad 
\lbrack g_6, g_1 \rbrack = g_7, \quad
\lbrack g_6, g_2 \rbrack = g_8
\: \rangle.
\end{eqnarray*}
Note that  relations of the form $g_i^3 = 1$ and $\lbrack g_i, g_j \rbrack = 1$ are also present but have not been displayed as is the usual convention. A presentation for the other group can be obtained by replacing the first power relation $g_1^3 = g_5 g_6^2$ with $g_1^3 = g_5 g_6^2 g_8^2$. Presentations for the $p$-quotients $G_c$ that appear in the intermediate stages of the computation can be obtained from the above presentations. The smallgroup database identifiers for these quotients are: $G_1 \cong (9,2)$, $G_2 \cong (27,3)$, $G_3 \cong (243,8)$ and $G_4 \cong (729,54)$.
\end{proof}

Theorem~\ref{main} leads immediately to the following  sufficient criterion for an imaginary quadratic field to have a $3$-class tower of length $3$.
\begin{corollary}
If $K$ is an imaginary quadratic field such that $G=\mathrm{G}_3^\infty(K)$ satisfies the conditions
in Theorem~\ref{main} then $K$ has $3$-class tower of length exactly $3$. The field $K = \mathbb{Q}(\sqrt{-9748})$ is one such example.
\end{corollary}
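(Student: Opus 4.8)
The plan is to deduce the corollary directly from Theorem~\ref{main} together with the arithmetic dictionary recalled in Section~2, and then to verify that $K = \mathbb{Q}(\sqrt{-9748})$ falls under those hypotheses. The first observation I would make is that the Schur $\sigma$-group hypothesis of Theorem~\ref{main} is automatic in this setting: for any imaginary quadratic field $K$ the group $G = \mathrm{G}_3^\infty(K)$ is a Schur $\sigma$-group. Indeed $G^{ab} \cong \mathrm{Cl}_3(K)$ is finite by class field theory, the generator and relation ranks agree by Shafarevich's theorem ($r = d$ for imaginary quadratic $K$), and complex conjugation furnishes an order-$2$ automorphism inducing the inversion map on $G^{ab}$, exactly as recalled at the start of Section~2. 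Consequently, once conditions (i)--(iii) are known to hold for $G$, Theorem~\ref{main} applies verbatim and identifies $G$ with one of the two finite $3$-groups of order $3^8$, each of derived length $3$.

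Next I would convert this group-theoretic conclusion into the arithmetic statement. Since $G$ is finite, the $3$-class tower of $K$ is finite, and by the correspondence discussed in Section~2 the fields of the tower are the fixed fields of the terms of the derived series of $G$, so that the length of the tower equals the derived length of $G$. As both candidate groups have derived length $3$, the tower has length exactly $3$. This proves the general assertion of the corollary for every $K$ whose group $G$ satisfies (i)--(iii), independently of which of the two candidates $G$ turns out to be.

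For the explicit example I would check that $K = \mathbb{Q}(\sqrt{-9748})$ meets conditions (i)--(iii), drawing on the computation already carried out in the Example of Section~2. There $\mathrm{Cl}_3(K) \cong [3,3]$, which gives (i) since $G^{ab} \cong \mathrm{Cl}_3(K)$. The $3$-class groups of the four unramified cyclic cubic extensions $K_1, \ldots, K_4$ are $[3,9]^3$ and $[9,27]$; via the isomorphisms $M_i^{ab} \cong \mathrm{Cl}_3(K_i)$ recalled in Section~2, these are precisely the components of $\tau(G)$, giving (ii). Finally, the capitulation (transfer) kernels computed there yield $\kappa(K) = (1,4,3,1)$, which is (iii). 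With (i)--(iii) verified and the Schur $\sigma$-group property automatic, Theorem~\ref{main} together with the first part of the argument shows that $K$ has $3$-class tower of length exactly $3$.

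Since the argument is essentially an assembly of results already in place, I do not anticipate a genuine mathematical obstacle. The point demanding the most care is the translation between the arithmetic invariants computed in the Example and the purely group-theoretic hypotheses of Theorem~\ref{main}: one must invoke the class field theory dictionary of Section~2 to identify $\mathrm{Cl}_3(K_i)$ with $M_i^{ab}$ and the capitulation kernels with the transfer kernels $\ker V_i$, and one must confirm that the tuple $(1,4,3,1)$ produced by Magma indeed lies in the equivalence class demanded by condition (iii). Both checks are routine given the setup, but they are the steps where an oversight could invalidate the conclusion.
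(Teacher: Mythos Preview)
Your proposal is correct and follows the same approach as the paper: the paper states that the corollary follows ``immediately'' from Theorem~\ref{main} and gives no explicit proof, so you have simply spelled out the intended argument---invoking the Schur $\sigma$-group property of $\mathrm{G}_3^\infty(K)$ for imaginary quadratic $K$, translating derived length into tower length via the dictionary of Section~2, and citing the Example of Section~2 for the verification of conditions (i)--(iii) in the case $K = \mathbb{Q}(\sqrt{-9748})$.
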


\begin{remark}
In~\cite[p. 41]{SoTa}, Scholz and Taussky claim that $K = \mathbb{Q}(\sqrt{-9748})$ has $3$-class tower of length $2$ which does not agree with the results of our work. We are not the first to observe that there may be a problem with this claim. It is shown in~\cite{BrGo} that there is no group theoretic restriction resulting from the capitulation conditions which prevents the length from being greater than $2$. We believe that we are the first to establish that the length is exactly $3$ under the conditions given above.
\end{remark}

\section{Acknowledgements}
Research of the last author is supported by the Austrian Science Fund (FWF): P 26008-N25. Both authors would like to thank Nigel Boston and Farshid Hajir for helpful discussions.

\end{document}